\documentclass[11pt,a4paper,oneside]{article}

\usepackage{amsthm}
\usepackage{amssymb}
\usepackage{graphicx}
\usepackage{titlesec}
\usepackage{colortbl}
\usepackage{subfigure}

\setlength{\footnotesep}{11pt}
\setlength{\skip\footins}{33pt}
\setlength{\parindent}{0pt}
\setlength{\parskip}{6pt}

\setlength{\oddsidemargin}{12mm}
\setlength{\textwidth}{140mm}

\newcommand{\R}{\mathbb{R}}

\setlength{\marginparwidth}{86pt}
\setlength{\marginparsep}{16pt}

\titleformat{\chapter}[hang]
{\color[rgb]{0.5,0.5,0.5}\bfseries\sffamily\LARGE}{\thechapter~~}{0pt}{}

\titleformat{\section}[hang]
{\bfseries\sffamily\Large}{\thesection~~}{0pt}{}

\titleformat{\subsection}[hang]
{\bfseries\sffamily}{\thesubsection~~}{0pt}{}

\titleformat{\subsubsection}[hang]
{\bfseries\sffamily\small}{}{0pt}{}

\titleformat{\paragraph}[runin]
{\bfseries\sffamily\small}{}{0pt}{}

\newtheoremstyle{thm}{16pt}{16pt}{\it}{0pt}{\bfseries\sffamily}{.~}{ }{\thmname{#1}\thmnumber{~#2}\thmnote{~(#3)}}
\theoremstyle{thm}
\newtheorem{thm}{Theorem}

\newtheoremstyle{lmm}{16pt}{16pt}{\it}{0pt}{\bfseries\sffamily}{.}{ }{\thmname{#1}\thmnumber{~#2}\thmnote{~(#3)}}
\theoremstyle{lmm}

\newtheoremstyle{crl}{16pt}{16pt}{\it}{0pt}{\bfseries\sffamily}{.}{ }{\thmname{#1}\thmnumber{~#2}\thmnote{~(#3)}}
\theoremstyle{crl}

\newtheoremstyle{prf}{2pt}{32pt}{}{0pt}{\it}{.}{ }{}
\theoremstyle{prf}

\newtheoremstyle{defi}{16pt}{16pt}{}{0pt}{\bfseries\sffamily}{.~}{ }{\thmname{#1}\thmnumber{~#2}\thmnote{~(#3)}}
\theoremstyle{defi}

\newtheoremstyle{nota}{16pt}{16pt}{}{0pt}{\bfseries\sffamily}{.~}{ }{\thmname{#1}\thmnumber{~#2}\thmnote{~(#3)}}
\theoremstyle{nota}

\newtheoremstyle{exer}{16pt}{16pt}{}{0pt}{\bfseries\sffamily}{.}{ }{\thmname{#1}\thmnumber{~#2}\thmnote{~(#3)}}
\theoremstyle{exer}

\newtheoremstyle{sol}{2pt}{32pt}{\footnotesize}{0pt}{\bfseries\sffamily}{.}{ }{}
\theoremstyle{sol}

\newtheoremstyle{exam}{16pt}{16pt}{}{0pt}{\bfseries\sffamily}{.}{ }{\thmname{#1}\thmnumber{~#2}\thmnote{~(#3)}}
\theoremstyle{exam}

\newtheoremstyle{rema}{16pt}{16pt}{}{0pt}{\bfseries\sffamily}{.}{ }{\thmname{#1}\thmnumber{~#2}\thmnote{~(#3)}}
\theoremstyle{rema}
\newtheorem{rema}{Remark}


\date{\today}

\title{\LARGE{\textsf{\textbf{Embedded Zassenhaus Expansion to Operator Splitting Schemes: Theory and Application in Fluid Dynamics}}}}

\author{
J\"urgen Geiser
\thanks{\footnotesize{{\ttfamily geiser@mathematik.hu-berlin.de}}} \\[6pt]
\footnotesize{Institute of Physics,} \\
\footnotesize{Felix-Hausdorff-Str. 6,} \\
\footnotesize{D-17489 Greifswald, Germany}}

\begin{document}


\maketitle

\begin{abstract}
In this paper, we contribute operator-splitting
methods improved by the Zassenhaus product for the numerical solution of
linear partial differential equations. 
We address iterative splitting methods, that can be improved by means of the 
Zassenhaus product formula, which is a sequnential splitting scheme.
The coupling of iterative and sequential splitting techniques
are discussed and can be combined with respect to their
compuational time. While the iterative splitting schemes are cheap
to compute, the Zassenhaus product formula is more expensive, based
on the commutators but achieves higher order accuracy.
Iterative splitting schemes and also Zassenhaus products are 
applied in physics and physical chemistry are important and are 
predestinated to their combinations of each benefits.
Here we consider phase models in CFD (computational fluid dynamics).
We present an underlying analysis for obtaining higher order 
operator-splitting methods based on the Zassenhaus product.

Computational benefits are given with sparse matrices, which arose of 
spatial discretization of the underlying partial differential equations.
While Zassenhaus formula allows higher accuracy, due to the fact that 
we obtain higher order commutators, we combine such an improved 
initialization process to cheap computable to linear convergent 
iterative splitting schemes.

 Theoretical discussion about convergence and application examples are
discussed with CFD problems.

\end{abstract}

{\bf Keywords.} Operator splitting method, Iterative splitting, Zassenhaus product, Parabolic differential equations, Convection-Diffusion-Reaction Equations.

{\bf AMS subject classifications.} 80A20, 80M25, 74S10, 76R50, 35J60, 35J65, 65M99, 65Z05, 65N12.


\section{Introduction}

Our motivation to study the operator splitting methods
come from models in fluid dynamics problems, for
example problems in bio-remediation \cite{ewing02} or radioactive contaminants
\cite{gei01}, \cite{fro00}.

While standard splitting methods deal with lower order convergence,
we propose to a combination of iterative splitting methods with embedded Zassenhaus product formula.

Theoretically, we combine fix-point schemes (iterative splitting methods)
with sequential splitting schemes (Zassenhaus products), which 
are connected to the theory of Lie groups and Lie algebras. 
Based on that relation, we can construct higher order splitting schemes 
for an underlying Lie algebra and improve the convergence results with
cheap iterative schemes.

Historically, the efficiency of decoupling different physical processes
into more simpler processes, e.g.,  convection and reaction, helps to 
accelerate the solver process and is discussed in \cite{mclach02}.

We propose the following ideas:
\begin{itemize}
\item Iterative splitting schemes are based on fix-point schemes, e.g. Waveform relaxation, which linearly improve the convergence order. Based on reducing 
integral operators to cheap computable matrices, they can be seen as solver methods, see \cite{geiser_2011_1}.
\item Zassenhaus formula are based on nested commutators, which are main keys to
derive higher order standard splitting schemes (e.g. Lie-Trotter and 
Strang splitting). They  are simple to compute with their nil-potent structure, see \cite{geiser_3}.
\end{itemize}

In this paper we study the following mathematical equations:
The equations are coupled with the reaction terms and are presented as follows.
\begin{eqnarray}
\label{eq1}
  && \partial_t R_i u_i + \nabla \cdot {\bf v} \; u_i = - \lambda_i \; R_i \;
  u_i + \lambda_{i-1} \; R_{i-1} \; u_{i-1} \\
&& + \beta (- u_i + g_i )\; \; \mbox{in} \;  \Omega \times
  (0,T) \; , \nonumber  \\
  && u_{i,0}(x) = u_i(x,0) \; \; \mbox{on} \; \Omega \; , \\
  && \partial_t R_i g_i = - \lambda_i \; R_i \;
  g_i + \lambda_{i-1} \; R_{i-1} \; g_{i-1} \\
&& + \beta (- g_i + u_i )\; \; \mbox{in} \;  \Omega \times
  (0,T) \; , \nonumber \\
  && g_{i,0}(x) = g_i(x,0) \; \; \mbox{on} \; \Omega \; , \\
  && i = 1, \ldots, m \; , \nonumber 
\end{eqnarray}
where $m$ is the number of equations and $i$ is the index of each 
component.
The unknown mobile concentrations $ u_i = u_i(x,t) $ are considered in 
$ \Omega \times (0,T) \subset \R^n \times \R^+$, 
where $n$ is the spatial dimension. 
The unknown immobile concentrations $ g_i = g_i(x,t) $ are considered in 
$ \Omega \times (0,T) \subset \R^n \times \R^+$, 
where $n$ is the spatial dimension. 
The retardation factors $R_i$ are constant and $R_i \ge 0$. 
The kinetic part is given by the factors $\lambda_i$.
They are constant and $\lambda_i \ge 0$. 
For the initialization of the kinetic part, we set $\lambda_0 = 0$.
The kinetic part is linear and irreversible, so the successors 
have only one predecessor. The initial conditions are given for each
component $i$ as constants or linear impulses.
For the boundary conditions we have trivial inflow and outflow conditions
with $u_i = 0$ at the inflow boundary. 
The transport part is given by the velocity ${\bf v} \in \R^n$ and 
is piecewise constant, see \cite{gei_diss03} and \cite{gei_r3t04}.
The exchange between the mobile and immobile part is given 
by $\beta$.

The outline of the paper is as follows.
The splitting-methods are discussed in Section \ref{oper}.
In Section \ref{num}, we present the numerical experiments
and the benefits of the higher order splitting methods.
Finally, we discuss future work in the area of iterative
and non-iterative splitting methods.

\section{Operator splitting methods}
\label{oper}

 We focus our attention on the case of two linear operators (i.e., we consider the Cauchy problem):

\begin{eqnarray}
\label{Cauchy}
&& \frac{\partial c(t)}{\partial t} = A
c (t) \; + \; B c (t), \;
\mbox{with} \; t \in [0,T], \; c(0) = c_0,
\end{eqnarray}
whereby the initial function $c_0$ is given and $A$ and $B$ are assumed to be bounded linear operators in the Banach-space $\mathbf{X}$
with $A, B : \mathbf{X} \rightarrow \mathbf{X}$. In realistic applications the operators correspond to physical operators such as convection and diffusion operators. We consider the following operator splitting schemes:\\

\subsection{Iterative Operator Splitting}

Iterative splitting with respect to one operator
\begin{eqnarray}
 \label{iter_wave}
&& \frac{\partial c_i(t)}{\partial t} = A
c_i(t) \; + \; B c_{i-1}(t), \;
\mbox{with} \; \; c_i(t^n) = c^{n}, i = 1, 2, \ldots, m \;
\end{eqnarray}

\begin{thm}
Let us consider the abstract Cauchy problem given in (\ref{Cauchy}).
Then, we the one-side iterative operator splitting method (\ref{iter_wave})
has the following accuracy:
\begin{eqnarray}
&& || (S_i - \exp((A+B) \tau)|| \le C \tau^i,
\end{eqnarray}
where $S_i$ is the approximated solution for the i-th iterative step
and $C$ is a constant that can be chosen uniformly on bounded time
intervals.
\end{thm}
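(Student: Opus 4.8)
The plan is to recast the one-sided iteration (\ref{iter_wave}) as a fixed-point iteration on the mild-solution (variation-of-constants) formulation and to estimate the propagation of the error at each step. Writing $e_i(t) = c_i(t) - c(t)$ for the error after the $i$-th iterate, where $c(t) = \exp((A+B)(t-t^n))c^n$ solves (\ref{Cauchy}), I would first subtract the integral forms of (\ref{iter_wave}) and (\ref{Cauchy}) on the local interval $[t^n, t^n+\tau]$ to obtain a recursion of the shape
\begin{eqnarray}
&& e_i(t) = \int_{t^n}^{t} \exp(A(t-s)) \, B \, e_{i-1}(s) \, \dd s .
\end{eqnarray}
The key point is that each application of this operator gains one power of $\tau$: since $A$ and $B$ are bounded, $\|\exp(A(t-s))\| \le \exp(\|A\|\tau)$ on the interval, and integrating over a window of length $\tau$ contributes the factor $\tau$, so $\|e_i\|_\infty \le \tau \|B\| \exp(\|A\|\tau) \|e_{i-1}\|_\infty$.

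**Key steps.** (1) Set up the mild formulation and derive the error recursion above; this only uses boundedness of $A$ and the fact that both $c_i$ and $c$ share the initial value $c^n$, so no boundary term survives. (2) Establish the base case: $e_0$ (or $e_1$, depending on how the zeroth iterate is initialized) is $O(1)$ or $O(\tau)$ in the sup-norm over $[t^n, t^n+\tau]$, which follows from the boundedness of the semigroup and of $B$ applied to a fixed starting function. (3) Iterate the one-step estimate $i$ times to get $\|e_i\|_\infty \le (\tau \|B\| \exp(\|A\|\tau))^{i} \cdot \|e_0\|_\infty \le C \tau^i$, absorbing $\exp(\|A\|T)$ and the norms of $A,B$ into the constant $C$, which is therefore uniform for $\tau$ bounded and $t$ in a bounded interval. (4) Translate the sup-norm bound on $e_i$ at $t = t^n + \tau$ into the claimed operator-norm estimate $\|S_i - \exp((A+B)\tau)\| \le C\tau^i$, where $S_i$ is the map $c^n \mapsto c_i(t^n+\tau)$; linearity of the scheme in $c^n$ makes this immediate.

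**Main obstacle.** The genuinely delicate point is step (2) and, relatedly, the precise bookkeeping of the exponent: one must be careful about what the first iterate $c_0(\cdot)$ (or the right-hand side $Bc_{i-1}$ for $i=1$) actually is, because the exponent in $C\tau^i$ depends on whether the iteration is seeded with a constant-in-time guess $c_0(t) \equiv c^n$, with the one-operator solution $\exp(A(t-t^n))c^n$, or with something else. A naive seed typically gives $\|e_1\| = O(\tau)$ and then each further iterate gains another power, matching the statement; but to get the clean $\tau^i$ with a single universal $C$ one has to check that no power is lost at the first step and that the constants from repeated use of $\exp(\|A\|\tau) \le \exp(\|A\|T)$ and of $\|B\|$ do not accumulate a $\tau$-dependence. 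A secondary subtlety is the global-versus-local distinction: the estimate is naturally local on $[t^n,t^n+\tau]$, and if a global statement over $[0,T]$ is intended one must sum the local errors over $\lceil T/\tau\rceil$ steps and argue stability of the composed propagators, which is where uniform boundedness of $C$ on bounded time intervals is really used. I would handle this by fixing the seed explicitly, proving the $O(\tau)$ base case by direct estimation, and then running the clean induction.
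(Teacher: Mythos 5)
Your proposal is correct and follows essentially the same route as the paper: both rest on the variation-of-constants (Duhamel) formulation, with the observation that each iteration of the integral operator $e_{i-1}\mapsto\int_{t^n}^{t}\exp(A(t-s))Be_{i-1}(s)\,\dd s$ gains one factor of $\tau$, so that after $i$ steps the error is $C\tau^{i}$ times the discrepancy of the initialization. The paper simply writes out the fully unrolled $i$-fold nested integral expansion of $c_i$ and $c$ and compares their innermost terms, whereas you keep the recursion and induct, which is a cleaner presentation of the identical argument; your attention to the seed $c_0$ matches the paper's remark that the accuracy of $c_{init}$ determines whether an extra power of $\tau$ is gained.
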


\begin{proof}

The proof is done for $i= 1, 2, \ldots$
and with the consistency error the $e_i(\tau) = c(\tau) - c_{i}(\tau)$ we have :
\begin{eqnarray}
&& c_i(\tau) = \exp(A \tau) c(t^n) \\
&& + \int_{t^n}^{t^{n+1}} \exp(A (t^{n+1} - s)) B \exp(s A) c(t^n) \; ds \nonumber\\
&& + \int_{t^n}^{t^{n+1}} \exp(A (t^{n+1} - s_1)) B \int_{t^n}^{t^{n+1} - s_1} \exp((t^{n+1} - s_1 - s_2) A) B \exp(s_2 A) c(t^n) \; ds_2 \; ds_1 \nonumber \\
&& + \ldots \nonumber \\
&& + \int_{t^n}^{t^{n+1}} \exp(A (t^{n+1} - s_1)) B \int_{t^n}^{t^{n+1} - s_1} \exp((t^{n+1} - s_1- s_2) A) B \exp(s_2 A) c(t^n) \; ds_2 \; ds_1 + \ldots  \nonumber \\
&& + \int_{t^n}^{t^{n+1}} \exp(A (t^{n+1} - s_1) ) B  \nonumber \\
&& \ldots \int_{t^n}^{t^{n+1} - \sum_{j=1}^{i-2} s_j} \exp(( t^{n+1} - \sum_{j=1}^{i-1} s_j) A) B c_{init}(s_i) \; ds_{i}\; ds_{i-1} \ldots \; ds_2 \; ds_1 \nonumber ,
\end{eqnarray}
\begin{eqnarray}
&& c(\tau)  = \exp(A \tau) c(t^n) \\
&& + \int_{t^n}^{t^{n+1}} \exp(A (t^{n+1} - s)) B \exp(s A) c(t^n) \; ds \nonumber\\
&& + \int_{t^n}^{t^{n+1}} \exp(A (t^{n+1} - s_1)) B \int_{t^n}^{t^{n+1} - s_1} \exp((t^{n+1} - s_1 - s_2) A) B \exp(s_2 A) c(t^n) \; ds_2 \; ds_1 \nonumber \\
&& + \ldots \nonumber \\
&& + \int_{t^n}^{t^{n+1}} \exp(A (t^{n+1} - s_1)) B \int_{t^n}^{t^{n+1} - s_1} \exp((t^{n+1} - s_1- s_2) A) B \exp(s_2 A) c(t^n) \; ds_2 \; ds_1 \nonumber \\
&& + \ldots + \int_{t^n}^{t^{n+1}} \exp(A (t^{n+1} - s_1) ) B  \nonumber \\
&& \ldots \int_{t^n}^{t^{n+1} - \sum_{j=1}^{i-1} s_j} \exp(( t^{n+1} - \sum_{j=1}^{i} s_j) A) B \exp((s_{i} (A+B)) c(t^n) \; ds_{i} \ldots \; ds_2 \; ds_1 \nonumber ,
\end{eqnarray}

We obtain:
\begin{eqnarray}
&& || e_i || \le || \exp((A+B) \tau) c(t^n) - S_i(\tau) c_{init}(\tau) || \nonumber \\
&& \le C \tau^{i} \max_{s_i \in [0, \tau]}|| \exp((s_{i} (A+B)) c(t^n) - c_{init}(s_i) || \nonumber \\
&& \le  C \tau^{i} || \exp((\tau (A+B)) c(t^n) - c_{init}(\tau) ||  ,
\end{eqnarray}
where $i$ is the number of iterative steps.

The same idea can be applied to the
even iterative scheme and also for alternating $A$ and $B$.

\end{proof}

\begin{rema}
The accuracy of the initialisation $c_{init}(\tau)$ is important
to conserve or improve the underlying iterative splitting scheme.

Here we have the following initialization schemes:
\begin{eqnarray}
 c_i(\tau) = \exp(A \tau)  c(t^n) \rightarrow  || e_i || \le  C \tau^{i} c(t^n) , \\
 c_i(\tau) = \exp(A \tau) \exp(B \tau) c(t^n) \rightarrow  || e_i || \le  C \tau^{i+1} c(t^n)  .
\end{eqnarray}
\end{rema}

\subsection{Zassenhaus formula (Sequential Splitting)}
\label{extend}

The Zassenhaus formula is an extension to the exponential splitting schemes 
and is given as:
\begin{eqnarray}
\exp((A + B ) t) = \pi_{i=1}^{j} \exp(a_i A t) \exp(b_i B t) \pi_{k=j}^{m} \exp(C_k t^k) + O(t^{m+1}) .
\end{eqnarray}
where $C_j$ is a function of Lie brackets of $A$ and $B$.

\begin{thm}

The initial value problem (\ref{Cauchy}) is solved by
classical exponential splitting schemes.

Then we can embed the Zassenhaus formula and improve the
classical splitting schemes
\begin{eqnarray}
\exp((A + B ) t) = \pi_{i=1}^{j} \exp(a_i A t) \exp(b_i B t) \pi_{k=j}^{m} \exp(C_k t^k) + O(t^{m+1}) .
\end{eqnarray}
where $C_j$ is a function of the Lie brackets of $A$ and $B$.

\end{thm}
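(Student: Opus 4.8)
The plan is to construct the correction operators $C_k$ one order at a time and to prove, by induction on $m$, that the truncated product reproduces $\exp((A+B)t)$ up to an operator-norm remainder of size $O(t^{m+1})$. The standing hypothesis that $A,B:\mathbf{X}\to\mathbf{X}$ are bounded is what makes this rigorous rather than merely formal: every exponential $\exp(Xt)$ below is a norm-convergent power series, the Baker--Campbell--Hausdorff (BCH) series for $\log(\exp(Xt)\exp(Yt))$ converges for $|t|$ small, and every ``$O(t^k)$'' denotes a genuine bound that is uniform for $t$ in a fixed interval $[0,T_0]$ with constant depending only on $\|A\|$, $\|B\|$, $k$, and $T_0$.

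First I would fix the base factor. Write $S_{\mathrm{base}}(t)=\prod_{i=1}^{j}\exp(a_iAt)\exp(b_iBt)$ for the given classical splitting, of classical order $p$ (so $p=1$ for Lie--Trotter, $p=2$ for Strang, and the consistency conditions $\sum_i a_i=\sum_i b_i=1$ hold). Iterated BCH gives $S_{\mathrm{base}}(t)=\exp\!\big((A+B)t+t^{p+1}E+O(t^{p+2})\big)$ with $E$ an explicit Lie polynomial in $A,B$, whence $S_{\mathrm{base}}(t)^{-1}\exp((A+B)t)=I+t^{p+1}C_{p+1}+O(t^{p+2})$ with $C_{p+1}=-E$; for Lie--Trotter this is $C_2=-\frac{1}{2}[A,B]$.

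The inductive step: suppose $C_{p+1},\dots,C_m$ are already found so that
\[
P_m(t):=\exp(-t^{m}C_m)\cdots\exp(-t^{p+1}C_{p+1})\,S_{\mathrm{base}}(t)^{-1}\exp((A+B)t)=I+O(t^{m+1}).
\]
Since $P_m(t)\to I$ we may form $\log P_m(t)=t^{m+1}C_{m+1}+O(t^{m+2})$ and \emph{define} $C_{m+1}$ as this leading coefficient, so that $\exp(-t^{m+1}C_{m+1})P_m(t)=I+O(t^{m+2})$, which is the statement at level $m+1$. The point that requires real work --- and the one I expect to be the main obstacle --- is to show that each $C_k$ is a \emph{Lie element}: a homogeneous degree-$k$ polynomial in nested commutators of $A$ and $B$, not an arbitrary bounded operator. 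At each stage $\log P_m(t)$ is the logarithm of a product of exponentials of Lie elements ($At$, $Bt$, and the earlier $t^kC_k$) together with $\exp((A+B)t)$, so by BCH it lies in the closed Lie subalgebra generated by $A$ and $B$; extracting its degree-$(m+1)$ homogeneous component --- via Friedrichs' criterion, i.e. the grading of the free Lie algebra on two generators --- keeps it inside that subalgebra. Equivalently, one can exhibit the recursion explicitly, e.g. $C_3=\frac{1}{3}[B,[A,B]]+\frac{1}{6}[A,[A,B]]$, and verify the commutator structure directly.

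Finally I would assemble the norm estimate. Rearranging the $m$-th identity,
\[
\exp((A+B)t)=S_{\mathrm{base}}(t)\,\exp(t^{p+1}C_{p+1})\cdots\exp(t^{m}C_m)\,\big(I+O(t^{m+1})\big),
\]
and since each $\|C_k\|$ is bounded by a polynomial in $\|A\|$ and $\|B\|$, every exponential factor on the right has norm at most $e^{ct}$ for a constant $c=c(\|A\|,\|B\|)$; multiplying a uniformly bounded product by $I+O(t^{m+1})$ leaves a remainder $C\,t^{m+1}$ with $C$ depending only on $\|A\|$, $\|B\|$, $m$, $T_0$. Comparing this with the classical product $\prod_{i=1}^{j}\exp(a_iAt)\exp(b_iBt)$ shows that appending the factors $\exp(C_kt^k)$ raises the accuracy from $O(t^{p+1})$ to $O(t^{m+1})$, which is the assertion. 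The even/alternating variants, and the coupling with the iterative splitting of the previous theorem, follow from the identical bookkeeping with $S_{\mathrm{base}}$ replaced by the appropriate product.
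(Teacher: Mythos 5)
Your construction is correct, but it is a genuinely different route from the one the paper takes. The paper's proof is a two-case sketch: for Lie--Trotter it simply invokes the known Zassenhaus expansion $\exp((A+B)t)=\exp(At)\exp(Bt)\Pi_{k\ge 2}\exp(C_kt^k)$ with coefficients taken from the cited reference, and for Strang it starts from the symmetric BCH expansion $\exp(At/2)\exp(Bt)\exp(At/2)=\exp(tS_1+t^3S_3+\cdots)$ and argues that multiplying the left- and right-sided Zassenhaus correction tails makes the order-$2$ factors cancel, so the combined correction product starts at $k=3$ (the gain of one order coming from the symmetry, as in Yoshida's composition argument). You instead give a unified inductive construction valid for an arbitrary base product $\prod_{i=1}^{j}\exp(a_iAt)\exp(b_iBt)$: define $C_{m+1}$ as the leading Taylor coefficient of $\log P_m(t)$, verify via BCH and the grading of the free Lie algebra that it is a Lie element, and close the induction. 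What your approach buys is precisely what the paper leaves implicit --- an actual existence proof for the $C_k$ rather than a citation, the Lie-bracket structure asserted in the statement, and the operator-norm content of the $O(t^{m+1})$ remainder for bounded $A,B$ (uniformity of constants, boundedness of the appended exponential factors). What the paper's route buys is brevity and the explicit observation, absent from your argument, that symmetric base schemes gain an extra order because the even-degree corrections vanish; if you wanted to recover that, you would add to your induction the remark that for a time-symmetric $S_{\mathrm{base}}$ the logarithm is odd in $t$, forcing $C_{2k}=0$. Both arguments establish the stated expansion; yours is the more complete proof of it.
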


\begin{proof}

1.) Lie-Trotter splitting:

For the Lie-Trotter splitting there exists coefficients with respect to the
extension:
\begin{eqnarray}
 \exp((A+B)t) = \exp(A t) \exp(B t) \Pi_{k=2}^\infty \exp(C_k t^k),
\end{eqnarray}
where the coefficients $C_k$ are given in \cite{geiser_3}.

Based on an existing Baker-Campbell-Hausdorff (BCH) formula of 
the Lie-Trotter splitting one can apply the Zassenhaus formula.

2.) Strang Splitting:

A existing BCH formula is given as:
\begin{eqnarray}
  \exp(A t/2) \exp(B t) \exp(A t/2) = \exp(t S_1 + t^3 S_3 + t^5 S_5 + \ldots), \end{eqnarray}
where the coefficients $S_i$ are given as in \cite{hair02}.

There exists a Zassenhaus formula based on the BCH formula:

%
\begin{eqnarray}
 \exp((A/2+B/2)t) =\Pi_{k=2}^\infty \exp(\tilde{C}_k t^k) \exp(A/2 t) \exp(B/2 t),
\end{eqnarray}
and
\begin{eqnarray}
 \exp((B/2+A/2)t) = \exp(B/2 t) \exp(A/2 t) \Pi_{k=2}^\infty \exp(C_k t^k),
\end{eqnarray}
then there exists a new product:
\begin{eqnarray}
 \Pi_{k=3}^\infty \exp(D_k t^k) = \Pi_{k=2}^\infty \exp(\tilde{C}_k t^k) \Pi_{k=2}^\infty \exp(C_k t^k),
\end{eqnarray}
with one order higher, see also \cite{yosh90}.

\end{proof}

\begin{rema}
In the following, we concentrate on the Lie-Trotter splitting with the embedded 
Zassenhaus formula, given as:
\begin{eqnarray}
E_{Zassen, Comp, 1}(t) = \exp(A t) \exp(B t) , \\
E_{Zassen, Comp, j}(t) = \exp(C_j t^j) , \mbox{for} \; j \in 2 \ldots, i,
\end{eqnarray}
where the sequential Zassenhaus operator
\begin{eqnarray}
\label{zassen_1}
E_{Zassen, i}(t) = \Pi_{j=1}^i E_{Zassen, Comp, j}(t) ,
\end{eqnarray}
is of accuracy $O(t^i)$ and $i$ are the number of Zassenhaus components.

\end{rema}

\subsection{Embedding Zassenhaus expansion to Iterative Splitting schemes}
\label{extend}

In the following we discuss the embedding of the Zassenhaus formula
into the iterative operator splitting schemes.

\begin{thm}

We solve the initial value problem (\ref{Cauchy}). We assume
bounded and constant operators $A$, $B$. 

The initialization process is done with the Zassenhaus formula:
\begin{eqnarray}
c_i(t) = E_{Zassen,i}(t) c_0.
\end{eqnarray}
where $E_{Zassen,i}(t)$ is given in (\ref{zassen_1}).

Further the improved solutions are embedded to the
iterative splitting schemes (\ref{iter_wave}) and we have after
$j$ iterative steps the following result:
\begin{eqnarray}
c_{i+j}(t) = E_{iter, j}(t) E_{Zassen,i}(t) c_0.
\end{eqnarray}
where we can improve the error of the iterative scheme to
$\mathcal{O}(t^{i+j})$.

\end{thm}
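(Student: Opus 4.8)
The plan is to combine the two error estimates already established in the excerpt: the iterative-splitting accuracy bound from the first theorem and the Zassenhaus accuracy $\mathcal{O}(t^i)$ recorded in the remark following the Zassenhaus theorem. The key observation is that the first theorem does not merely give $\|S_j - \exp((A+B)t)\| \le C t^j$ with a fixed initial guess; its proof shows the sharper relative statement that the consistency error after $j$ iterative steps is controlled by $C t^{j}$ times the error of the initialization function $c_{\mathrm{init}}$ against the exact propagator. That is exactly the content of the final displayed chain in the proof of Theorem~1, namely $\|e_j\| \le C t^{j} \,\| \exp(t(A+B)) c(t^n) - c_{\mathrm{init}}(t)\|$. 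So the strategy is: feed $c_{\mathrm{init}}(t) = E_{Zassen,i}(t) c_0$ into that bound.

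First I would set up the notation carefully: the composed scheme is $c_{i+j}(t) = E_{iter,j}(t) E_{Zassen,i}(t) c_0$, where $E_{iter,j}$ denotes $j$ steps of the iterative recursion (\ref{iter_wave}) started from the Zassenhaus-initialized datum rather than from $\exp(At)c_0$. Next I would invoke the first theorem in its refined form to write
\begin{eqnarray}
\| \exp((A+B)t) c_0 - c_{i+j}(t) \| \le C\, t^{j}\, \| \exp((A+B)t) c_0 - E_{Zassen,i}(t) c_0 \|. \nonumber
\end{eqnarray}
Then I would apply the Zassenhaus accuracy statement from (\ref{zassen_1}) and its surrounding remark, namely $\| \exp((A+B)t) - E_{Zassen,i}(t)\| \le \tilde{C}\, t^{i}$, which holds because $E_{Zassen,i}$ reproduces the Zassenhaus product through the order-$i$ term and the operators $A,B$ are bounded (so the nested-commutator tail is a norm-convergent series in $t$). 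Substituting gives $\| \exp((A+B)t) c_0 - c_{i+j}(t)\| \le C\tilde{C}\, t^{i+j} \|c_0\|$, which is the claimed $\mathcal{O}(t^{i+j})$ bound, with the constant uniform on bounded time intervals since both input constants are.

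The main obstacle is justifying that the iterative error genuinely multiplies the \emph{initialization} error by $t^{j}$ rather than only giving an absolute $t^{j}$ bound; this requires re-reading the proof of Theorem~1 to confirm that the only place the initial guess enters the recursion is through the innermost factor $B\, c_{\mathrm{init}}(s_i)$, so that the difference $c(\tau) - c_{i+j}(\tau)$ telescopes down to an $(j)$-fold iterated integral of $B$ acting on $(\exp(s_i(A+B)) - E_{Zassen,i}(s_i)) c_0$, each integration over an interval of length $\le \tau$ contributing one power of $\tau$ together with a $\|B\|\,\|{\exp(A\cdot)}\|$ factor absorbed into $C$. A secondary point to address is that the Zassenhaus factors $\exp(C_k t^k)$ are themselves only defined through a (possibly infinite) product, so one should either truncate $E_{Zassen,i}$ at exactly the components $j=1,\dots,i$ as in the remark — which is what is done — and bound the remainder, or note that for bounded $A,B$ the full product converges for $t$ small; either way the per-step boundedness of $E_{Zassen,i}(t)$ (needed so that $C$ stays uniform) follows from boundedness of $A$ and $B$ on the fixed time interval.
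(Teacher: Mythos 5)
Your proposal follows essentially the same route as the paper's own (very terse) proof: both feed the truncated Zassenhaus product $E_{Zassen,i}(t)c_0$ into the iterative scheme as the initialization $c_{\mathrm{init}}$ and combine the relative bound $\|e_j\| \le C\, t^{j}\,\|\exp(t(A+B))c_0 - c_{\mathrm{init}}(t)\|$ from the first theorem with the $\mathcal{O}(t^{i})$ accuracy of $E_{Zassen,i}$. Your write-up is in fact more explicit than the paper's, which simply asserts the product bound $\mathcal{O}(t^{i+j})$ without spelling out that the iterative error multiplies the initialization error, so no gap relative to the paper's argument.
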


\begin{proof}

The solution of the iterative splitting scheme (\ref{iter_wave}) is given as:
\begin{eqnarray}
c_{i+j}(t) = E_{iter, i}(t) c_{init,j}.
\end{eqnarray}
where $S_i(t) = E_{iter, i}(t)$.

The initialization is given with the Zassenhaus formula as:
\begin{eqnarray}
c_{init,j}(t) = E_{Zassen, j}(t) c_0.
\end{eqnarray}
combining both splitting schemes we have the local error:
\begin{eqnarray}
e_{i+j}(t) = || c(t) - c_{i+j}(t) ||  = || c(t) - E_{iter, i}(t)  E_{Zassen, j}(t) c_0 || \le O(t^{i+j}) c_0.
\end{eqnarray}

\end{proof}

\section{Numerical Examples}
\label{num}

In this section, we discuss examples to the usage of the 
embedded Zassenhaus product methods to the iterative splitting schemes.
In the first examples, we demonstrate somewhat artificially how the proposed
Zassenhaus splitting method avoids the splitting error up to a certain
order. The next examples show the solution of partial
differential equation which can be improved by the 
Zassenhaus products.

In the following, we deal with numerical example to verify 
the theoretical results.

\subsection{First Example: Matrix problem}

For another example, consider the matrix equation,
\begin{eqnarray}
\label{equation3}
  u'(t) & =& \left[
               \begin{array}{cc}
                 1 & 2 \\
                 3 & 0 \\
               \end{array}
             \right]u, \,\,\,\,\,\, u(0)=u_0=\left(
                                         \begin{array}{c}
                                           0 \\
                                           1 \\
                                         \end{array}
                                       \right),
\end{eqnarray}
 the exact solution is
\begin{eqnarray}
\label{exact2}
  u(t) & =& 2(e^{3t}-e^{-2t})/5.
\end{eqnarray}
We split the matrix as,

\begin{eqnarray}
\label{equation4}
  \left[
    \begin{array}{cc}
                 1 & 2 \\
                 3 & 0 \\
               \end{array}
             \right]& =& \left[
               \begin{array}{cc}
                 1 & 1 \\
                 1 & 0 \\
               \end{array}
             \right]+\left[
                       \begin{array}{cc}
                         0 & 1 \\
                         2 & 0 \\
                       \end{array}
                     \right]
\end{eqnarray}

The Figure \ref{simple} present the numerical errors between the exact and the
numerical solution.
\begin{figure}[ht]
\begin{center}  
\includegraphics[width=7.0cm,angle=-0]{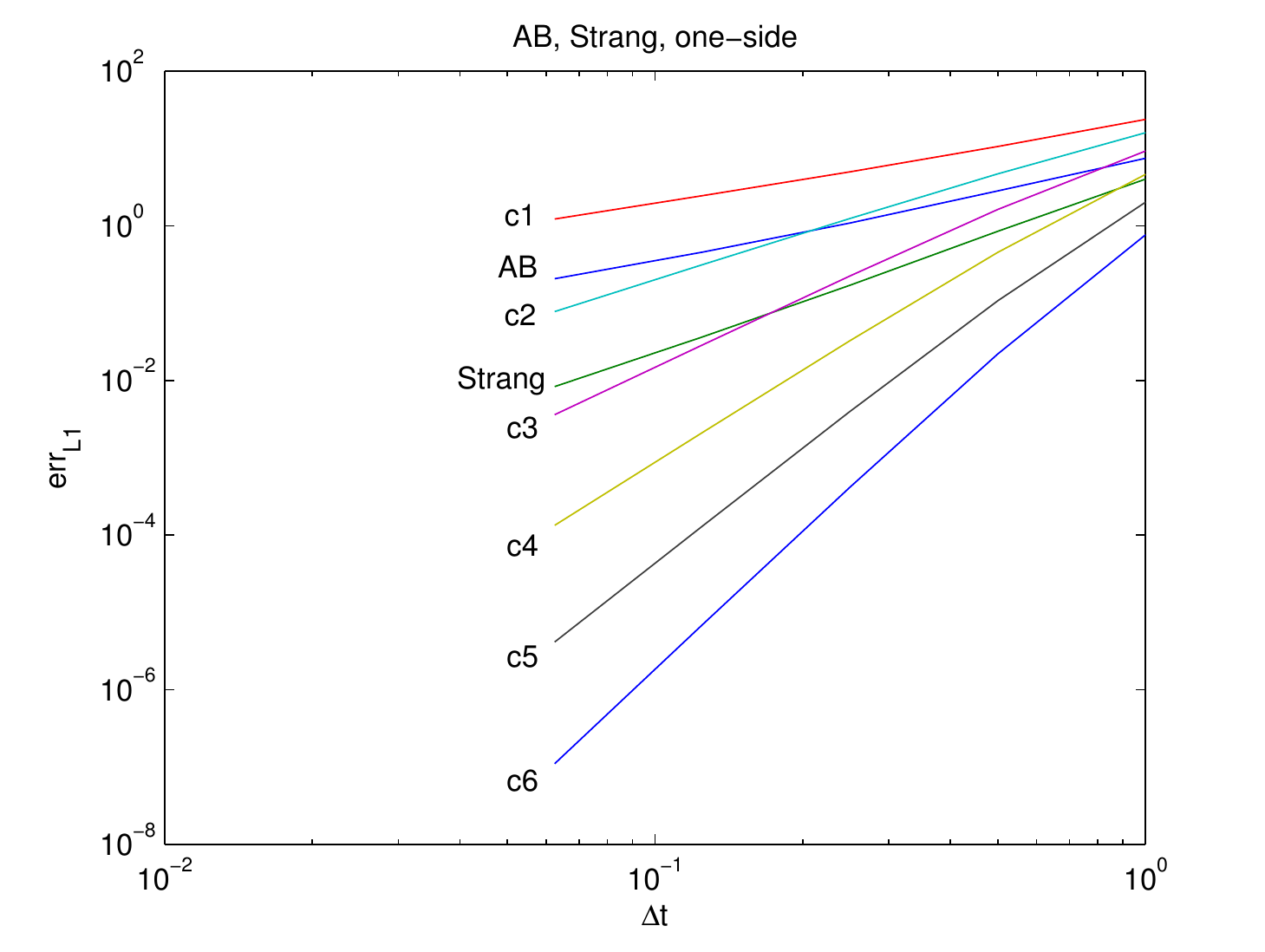} 
\includegraphics[width=7.0cm,angle=-0]{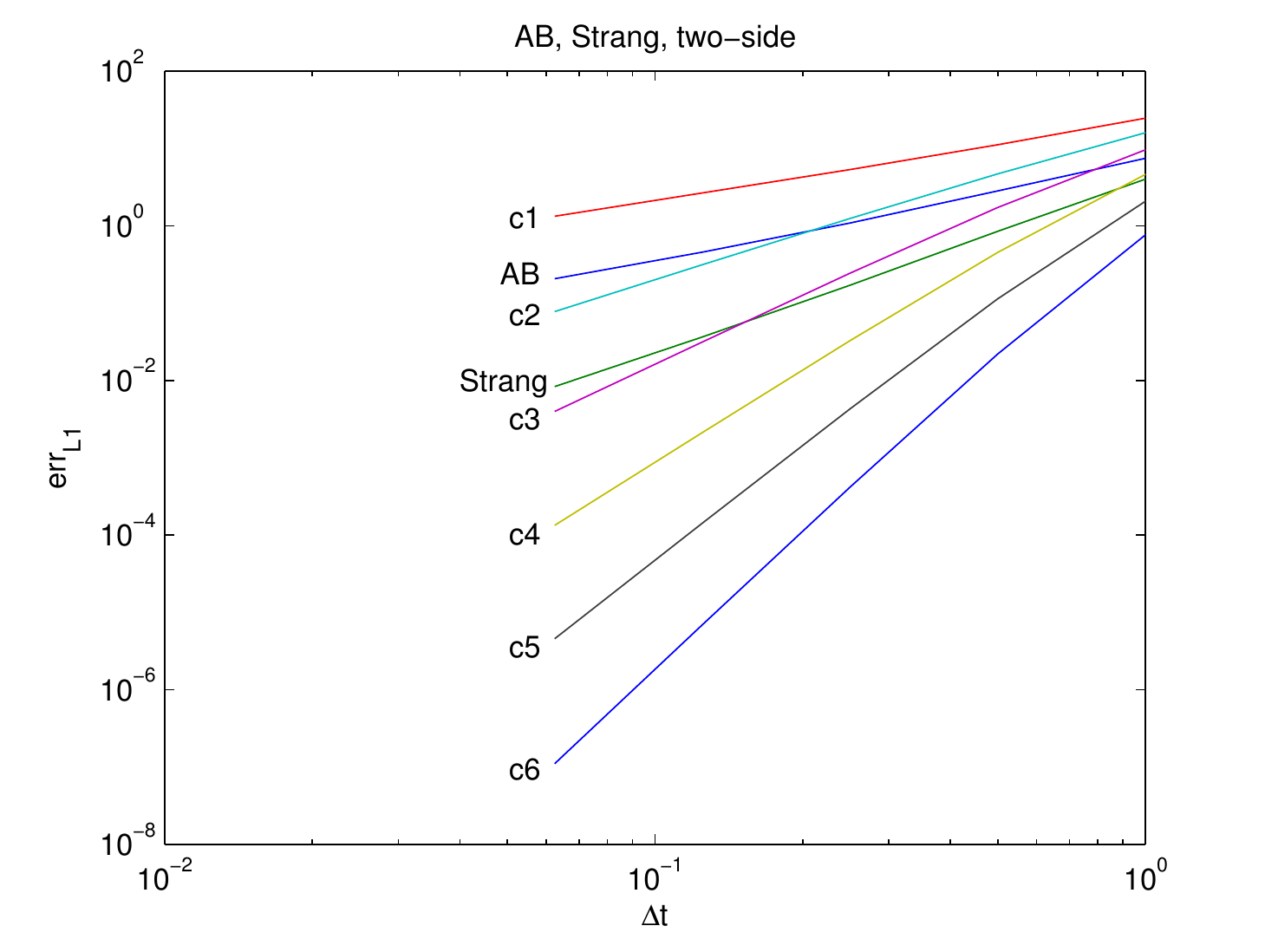} 
\end{center}
\caption{\label{simple} Numerical errors of the standard Splitting scheme and the
iterative schemes with $1, \ldots, 6$ iterative steps.}
\end{figure}

The Figure \ref{simple_comp} present the CPU time of the standard and the iterative splitting schemes.
\begin{figure}[ht]
\begin{center}  
\includegraphics[width=7.0cm,angle=-0]{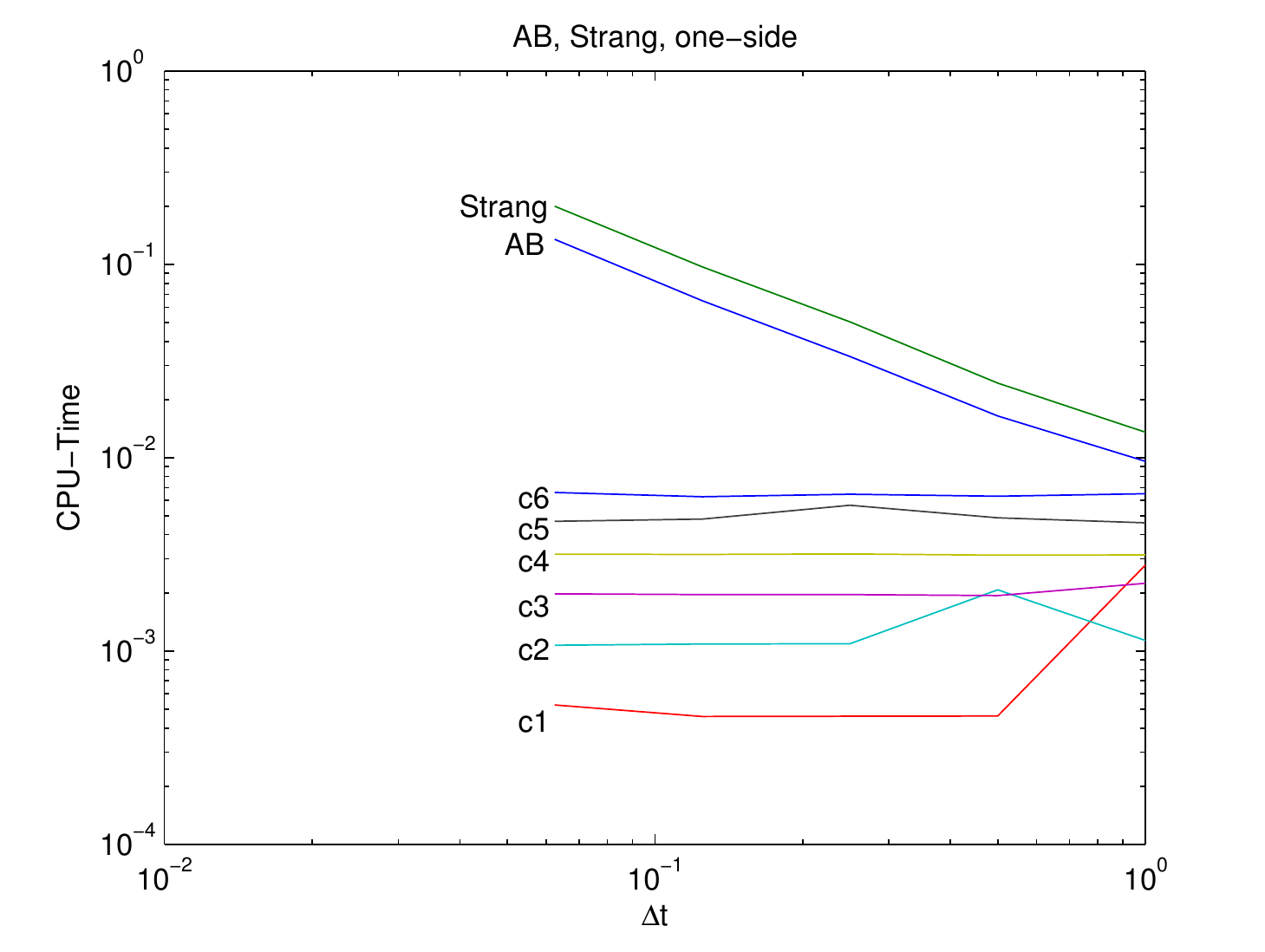} 
\includegraphics[width=7.0cm,angle=-0]{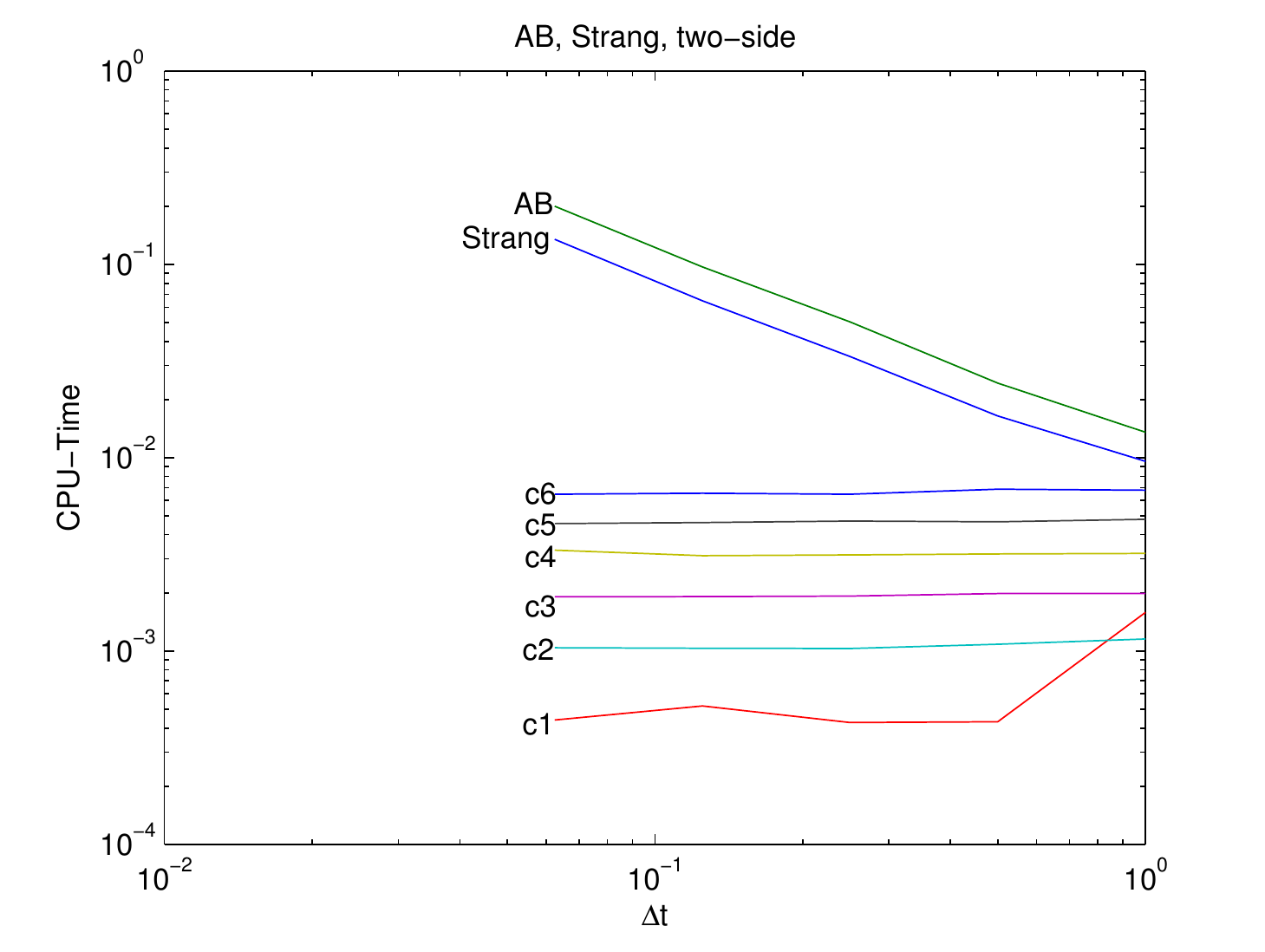} 
\end{center}
\caption{\label{simple_comp} CPU time of the standard Splitting scheme and the
iterative schemes with $1, \ldots, 6$ iterative steps.}
\end{figure}

\begin{rema}
We see that the errors decrease significantly with increasing
order of approximation for the initialization process with the
Zassenhaus splitting. 
Here we have the benefit in the application of the Zassenhaus product
schemes to the standard Lie-Trotter or Strang-Marchuk splitting. 
Similar results are given with the iterative steps $i=3 \ldots, 6$, as expected.
\end{rema}

\subsection{One phase example}

The next example is a simplified real-life problem 
for a multiphase transport-reaction equation.
We deal with mobile and immobile pores in the porous media,
such simulations are given for waste scenarios.

We concentrate on the computational benefits of a fast
computation of the mixed iterative scheme with the
Zassenhaus formula. \\

The one phase equation is given as:
\begin{eqnarray}
\label{mobile1_1}
&& \partial_t c_1  + \nabla \cdot {\bf F} c_1  = - \lambda_1 c_1  ,  \; \mbox{in} \; \Omega \times [0, t] , \\
&& \partial_t c_2  + \nabla \cdot {\bf F} c_2  =  \lambda_1 c_1 - \lambda_2 c_2  ,  \; \mbox{in} \; \Omega \times [0, t] , \\
&& {\bf F}  = {\bf v}  - D \nabla , \\
&& c_1({\bf x}, t) = c _{1,0}({\bf x}),  c_2({\bf x}, t) = c _{2,0}({\bf x}) , \; \mbox{on} \; \Omega , \\
&& c_1({\bf x}, t) = c_{1,1}({\bf x},t) , c_2({\bf x}, t) = c_{2,1}({\bf x},t) , \;  \mbox{on} \; \partial \Omega  \times [0, t] .
\end{eqnarray}
where we have the parameters:
$v= 0.1$, $D=0.01$, $\lambda_1 = \lambda_2 = 0.1$.

In the following we deal with the  finite difference schemes for the
convection and diffusion operators and semidiscretize the equation,
which is given as:
\begin{eqnarray}
\label{mobile1_2}
&& \partial_t {\bf c}  = (A_1 + A_2) {\bf c} ,
\end{eqnarray}
We obtain the two matrices and consider to decouple the diffusion 
and convection part:
\begin{eqnarray}
A_1 & = &  \left(\begin{array}{c c}
 A_{diff} & 0 \\
 0 &  A_{diff}
\end{array}\right) ~\in~\R^{2 I \times 2 I}
\end{eqnarray}
\begin{eqnarray}
A_2 & = &  \left(\begin{array}{c c}
 A_{Conv} & 0 \\
 0 & A_{Conv}
\end{array}\right) + \left(\begin{array}{c c}
 - \Lambda_1 & 0 \\
 \Lambda_1 & - \Lambda_2
\end{array}\right) ~\in~\R^{2 I \times 2 I}
\end{eqnarray}
For the operator $A_1$ and $A_2$ we apply the splitting method,
given in Section \ref{para}.

The submatrices are given in the following:
\begin{eqnarray}
A & = &   \left(\begin{array}{c c}
 A_{diff} & 0 \\
 0 &  A_{diff}
\end{array}\right) +   \left(\begin{array}{c c}
 A_{conv} & 0 \\
 0 &  A_{conv}
\end{array}\right) \nonumber  \\
 & = &  \frac{D}{\Delta x^2}\cdot  \left(\begin{array}{rrrrr}
 -2 & 1 & ~ & ~ & ~ \\
 1 & -2 & 1 & ~ & ~ \\
 ~ & \ddots & \ddots & \ddots & ~ \\
 ~ & ~ & 1 & -2 & 1 \\
 ~ & ~ & ~ & 1 & -2
\end{array}\right) \nonumber \\[6pt]
 & - & \frac{v}{\Delta x}\cdot \left(\begin{array}{rrrrr}
 1 & ~ & ~ & ~ & ~ \\
 -1 & 1 & ~ & ~ & ~ \\
 ~ & \ddots & \ddots & ~ & ~ \\
 ~ & ~ & -1 & 1 & ~ \\
 ~ & ~ & ~ & -1 & 1
\end{array}\right)~\in~\R^{I \times I}
\end{eqnarray}
where $I$ is the number of spatial points and $\Delta x$ is the spatial step size.
\begin{eqnarray}
\Lambda_1 & = &    \left(\begin{array}{rrrrr}
 \lambda_1 & 0 & ~ & ~ & ~ \\
        0 &  \lambda_1 & 0 & ~ & ~ \\
 ~ & \ddots & \ddots & \ddots & ~ \\
 ~ & ~ & 0 & \lambda_1 & 0 \\
 ~ & ~ & ~ & 0 & \lambda_1
\end{array}\right) ~\in~\R^{I \times I}
\end{eqnarray}
\begin{eqnarray}
\Lambda_2 & = &    \left(\begin{array}{rrrrr}
 \lambda_2 & 0 & ~ & ~ & ~ \\
        0 &  \lambda_2 & 0 & ~ & ~ \\
 ~ & \ddots & \ddots & \ddots & ~ \\
 ~ & ~ & 0 & \lambda_2 & 0 \\
 ~ & ~ & ~ & 0 & \lambda_2
\end{array}\right) ~\in~\R^{I \times I}
\end{eqnarray}

We have the following results:

We have the spatial step size $\Delta x = 0.1$.

The Figure \ref{one_phase_1} present the numerical errors between the exact and the
numerical solution.
\begin{figure}[ht]
\begin{center}  
\includegraphics[width=7.0cm,angle=-0]{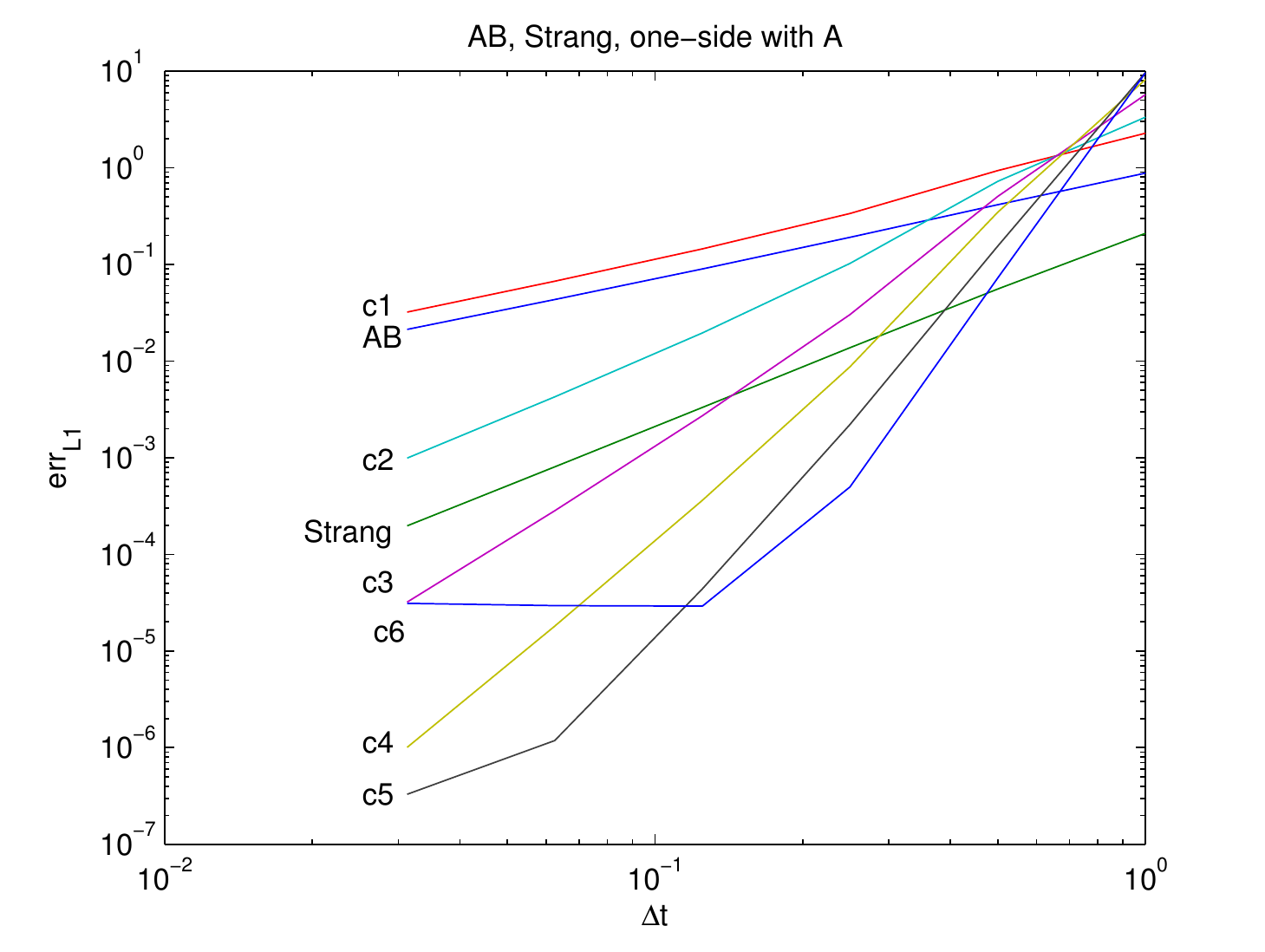} 
\includegraphics[width=7.0cm,angle=-0]{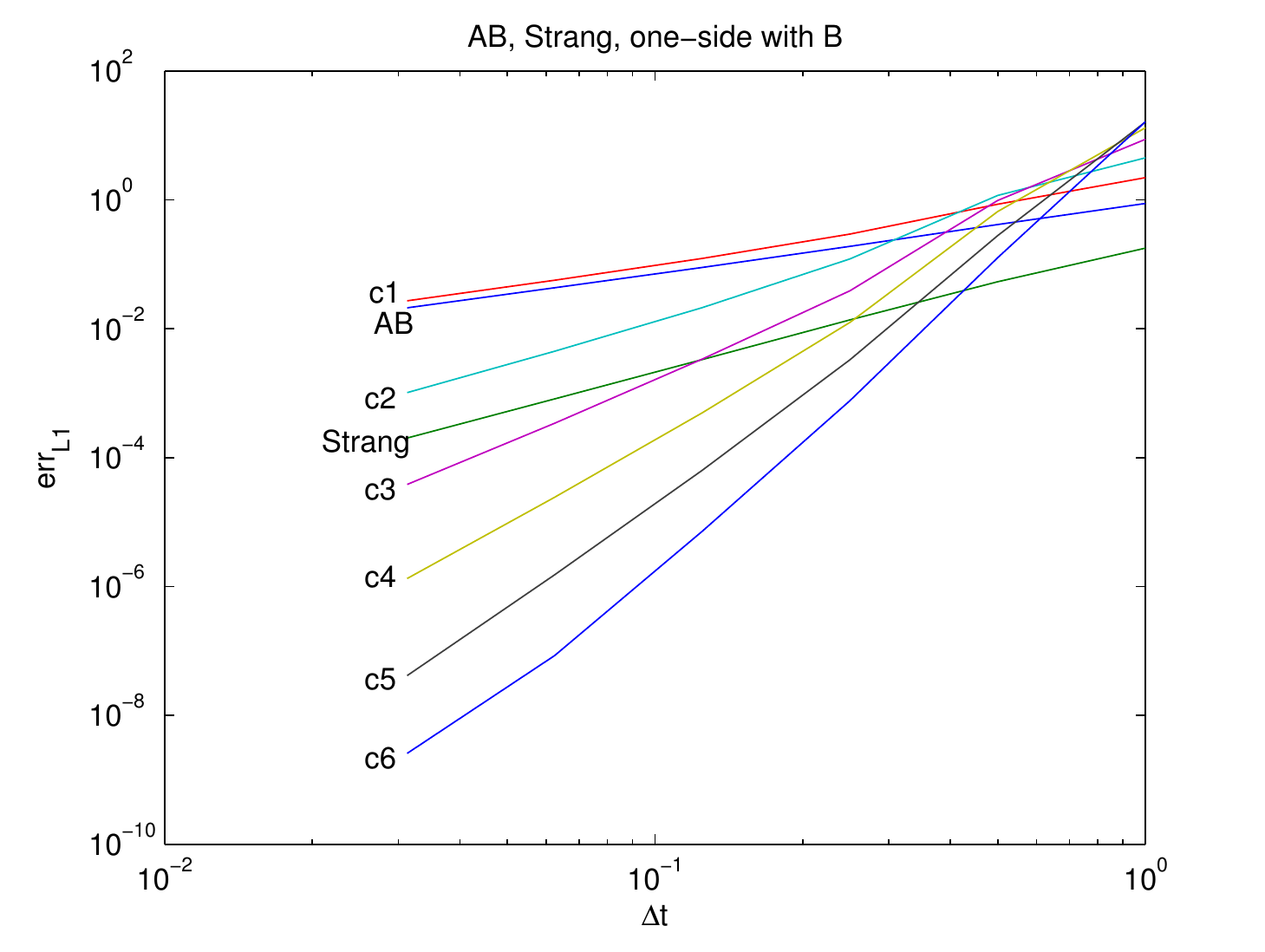} 
\includegraphics[width=7.0cm,angle=-0]{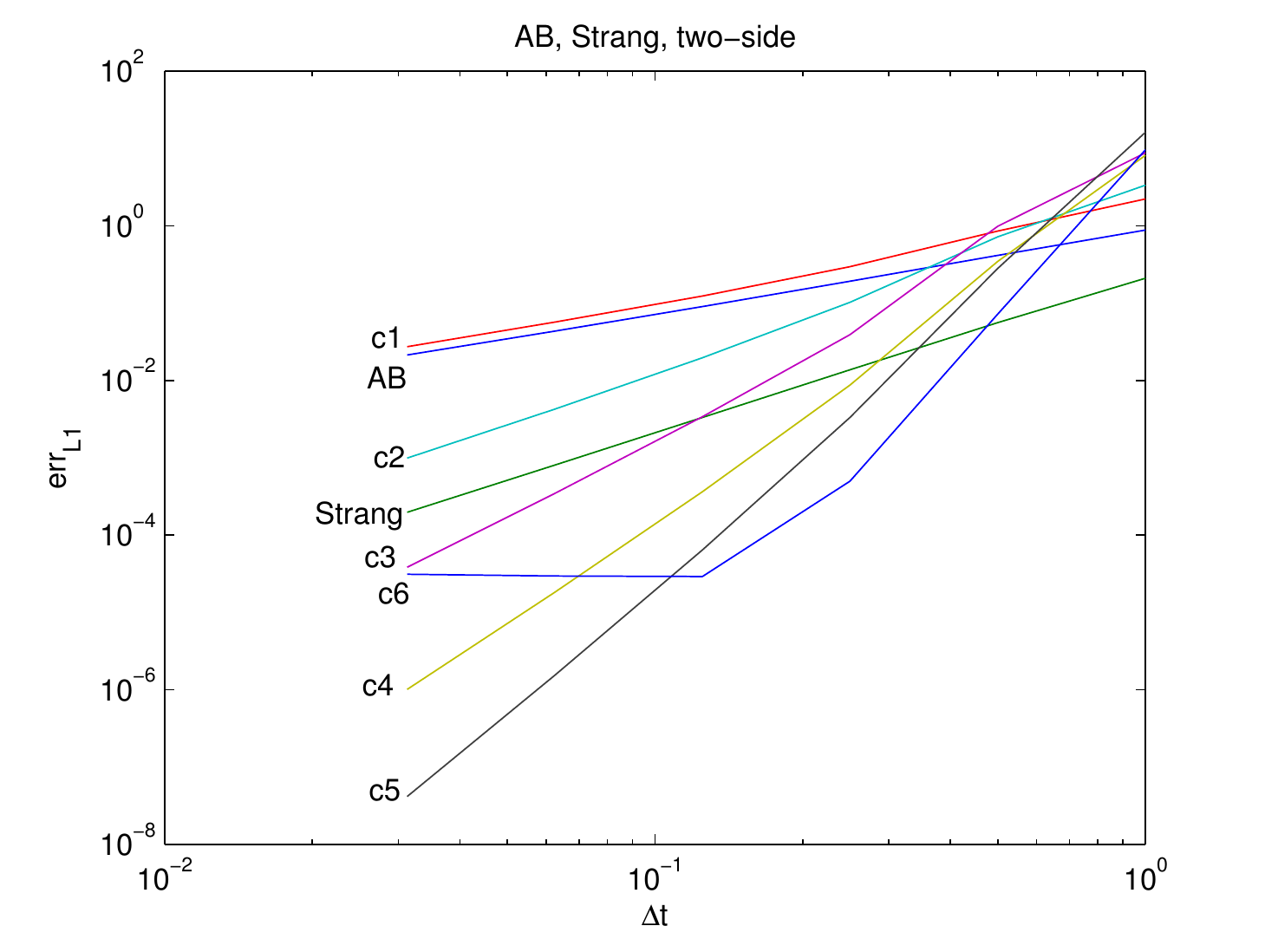} 
\end{center}
\caption{\label{one_phase_1} Numerical errors of the standard Splitting scheme and the
iterative schemes with $1, \ldots, 6$ iterative steps.}
\end{figure}

The Figure \ref{one_phase_CPU} present the CPU time of the standard and the iterative splitting schemes.
\begin{figure}[ht]
\begin{center}  
\includegraphics[width=7.0cm,angle=-0]{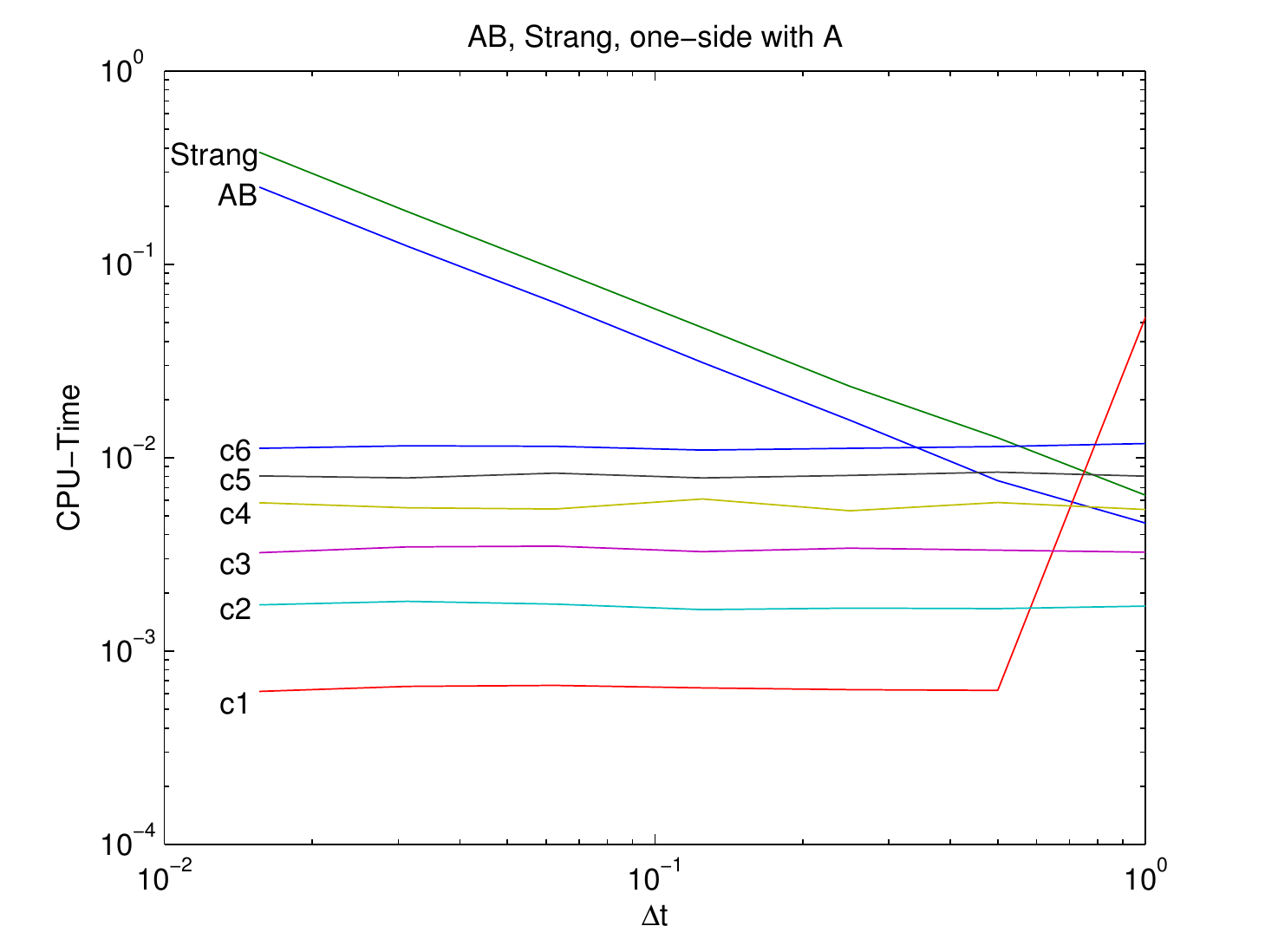}
\includegraphics[width=7.0cm,angle=-0]{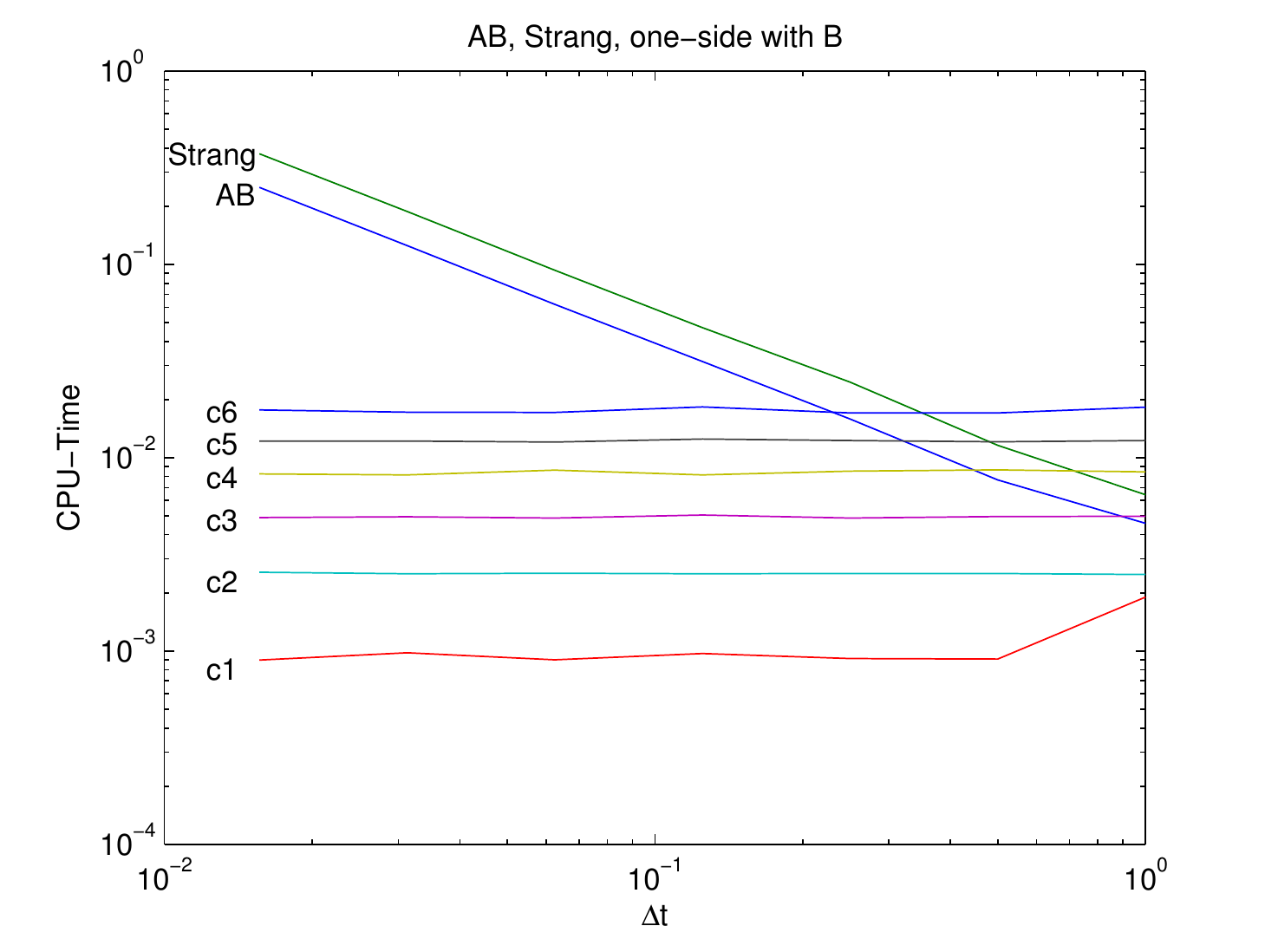}
\includegraphics[width=7.0cm,angle=-0]{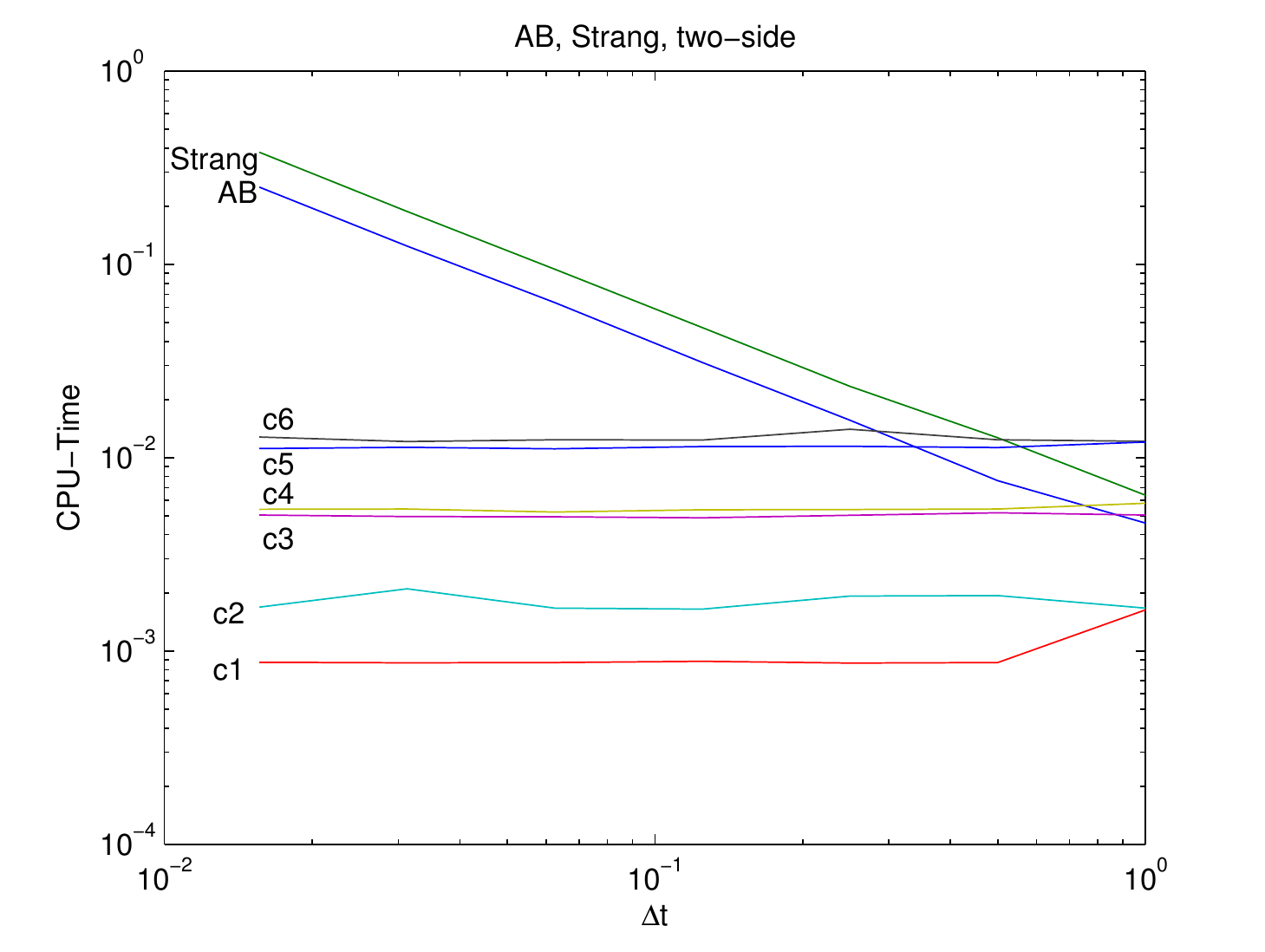} 
\end{center}
\caption{\label{one_phase_CPU} CPU time of the standard Splitting scheme and the
iterative schemes with $1, \ldots, 6$ iterative steps.}
\end{figure}

\begin{rema}
For the iterative schemes with embedded Zassenhaus products, we can reach 
faster and more improved results.
With $4-5$ iterative steps we obtain more accurate results as we did
for the expensive standard schemes.
With one-side iterative schemes we reach the best convergence results.
\end{rema}

\section{Conclusions and Discussions }
\label{conc}
 In this work, we have presented a  novel splitting scheme
combing the ideas of iterative and sequential schemes.
Here the idea to decouple the expensive computation of 
only matrix exponential based schemes to simpler embedded 
Zassenhaus schemes, which have their benefits of 
less computational time, while the commutator can be computed very cheap.
On the other hand simple linear iterative steps can be done very cheap
and accelerated the solvers.
The error analysis presented stable methods for the higher order schemes.
In the applications, we could show the speedup with the Zassenhaus 
enhanced methods.
In future we concentrate on linear and nonlinear matrix dependent 
scheme, that switches between no-iterative and iterative schemes 
based on Zassenhaus products.


\bibliographystyle{plain}

\end{document}